\newtheorem{teo}{Theorem}[section] 
\newtheorem*{teo*}{Theorem}
\newtheorem{prop}[teo]{Proposition}
\newtheorem{definicion}{Definition}[section]
\newtheorem{exa}{Example}[section]
\newcommand{\R}{\mathbb{R}}\newcommand{\Rn}{\R^n}\newcommand{\Rm}{\R^m}\newcommand{\Rnn}{\Rmn}\newcommand{\Rmn}{\R^{m\times n}}
\newcommand{\N}{\mathbb{N}}
\renewcommand{\b}{\beta}
\renewcommand{\O}{\Omega}
\renewcommand{\d}{\delta}
\newcommand{\s}{\sigma}
\renewcommand{\l}{\lambda}
\DeclareMathOperator{\cof}{cof}
\title{Bond-based peridynamics does not converge to hyperelasticity as the horizon goes to zero}
\author{J. C. Bellido, J. Cueto, C. Mora-Corral}
\begin{document}

\maketitle

\begin{abstract}
Bond-based peridynamics is a nonlocal continuum model in Solid Mechanics in which the energy of a deformation is calculated through a double integral involving pairs of points in the reference and deformed configurations.
It is known how to calculate the $\Gamma$-limit of this model when the horizon (maximum interaction distance between the particles) tends to zero, and the limit turns out to be a (local) vector variational problem defined in a Sobolev space, of the type appearing in (classical) hyperelasticity.
In this paper we impose frame-indifference and isotropy in the model and find that very few hyperelastic functionals are $\Gamma$-limits of the bond-based peridynamics model.
In particular, Mooney-Rivlin materials are not recoverable through this limit procedure.
\end{abstract}

\pagestyle{empty}



\pagestyle{plain}

\section[Introduction]{Introduction}

Peridynamics is a nonlocal continuum model for Solid Mechanics proposed by Silling in \cite{Silling00}. The main difference with classical elasticity (\cite{Ciarlet88}) relies on the nonlocality, reflected in the fact that points separated by a positive distance exert a force upon each other. In this model the use of gradients is avoided by computing internal forces by integration instead of differentiation, and consequently the elastic energy is the result of a double integration. A main feature is that deformations are not assumed to be smooth, in contrast with classical continuum mechanics, where they are required to be, at least, weakly differentiable as functions of a Sobolev space. This makes peridynamics a suitable framework for problems where discontinuities appear naturally, such as fracture, dislocation, or, in general, multiscale materials. Since the pioneering paper \cite{Silling00}, the development of peridynamics has been really overwhelming, both from a theoretical and a numerical-practical point of view. Some references on this are \cite{SiEpWeXuAs07,SiLe10,LeSi08,JaMoOtOt} and the two books \cite{Gerstle, madenci_oterkus}. 

The original model proposed in \cite{Silling00} was the so-called bond-based model, in which the elastic energy is given by a double integral depending on pairs of points in the reference and deformed configurations. It is assumed the existence of a function $w$, named as \emph{pairwise potential function}, such that the total (macroelastic) energy of any deformation $u:\O\rightarrow \Rm$ of the deformable solid $\O\subset \R^n$ is given by 
\begin{equation}\label{eq:nlenergy}
\int_\O\int_{\O\cap B(x,\delta)} w(x-x', u(x)-u(x'))\,dx\,dx',
\end{equation}
where $\delta>0$ is the \emph{horizon}, a model parameter which measures the maximum interaction distance between the particles.
Physically, $n = m = 3$, but it is of mathematical interest to do the analysis for any $n$ and $m$.
In the isotropic linear elastic case, for which the pairwise potential function is quadratic in its second variable, this model limits the Poisson ratio of homogeneous deformations to be $\frac{1}{4}$. This shows that the bond-based model suffers from severe restrictions in order to represent a wide variety of elastic materials. The state-based model was proposed as a more general nonlocal peridynamic model that avoids this serious limitation \cite{SiEpWeXuAs07,SiLe10}. Although the bond-based model presents that restriction, it has been very popular in the last years and has shown to be appropriate and effective in modelling singularity phenomena in situations of practical and academic interest (see the recent survey \cite{JaMoOtOt} and the references therein).  

In this paper, we are concerned with the bond-based model in the general nonlinear situation, and more concretely with its relationship with classical theory of hyperelasticity. In \cite{Silling00}, the link between the peridynamic bond-based model and conventional models was established in terms of the Piola-Kirchhoff stress tensor: given a pairwise potential functional, there exists a stored-energy density function such that the corresponding local Piola-Kirchhoff stress tensor coincides with the nonlocal stress tensor (also measuring force per unit area in the reference configuration). In that work, it is also argued that even though there is a stored-energy density verifying such a condition for a given pairwise potential, the reciprocal is not true, i.e., not for every hyperelastic density one can find a pairwise potential function with the same Piola-Kirchhoff tensor. Our perspective here is different and relies on the energy rather than on the  stress.
First, we recall from the literature in which sense the nonlinear bond-based model \eqref{eq:nlenergy} converges as $\delta \rightarrow 0$ to a local hyperelastic energy like
\begin{equation}\label{eq:lenergy}
\int_\O W(\nabla u(x))\,dx.
\end{equation}
The proper framework to set this issue is $\Gamma$-convergence (\cite{Braides}), or convergence of variational problems, since it infers convergence of infima and minimizers, if they exist. In \cite{BeMCPe}, this question was addressed in a general setting, showing that the $\Gamma$-limit is a vector variational problem, and obtaining an explicit characterization of the $\Gamma$-limit (hence, of the $W$ in \eqref{eq:lenergy}) for a given pairwise potential function $w$. In this investigation, we push forward the calculation in \cite{BeMCPe} aimed to determine whether it is possible or not to recover typical models of hyperelasticity, from bond-based models verifying the natural physical restrictions of frame indifference and isotropy. Our conclusion is that nonlinear bond-based models converge, in the sense of $\Gamma$-convergence, to hyperelastic models with very limited structure and degrees of freedom.
In particular, they cannot converge to a typical hyperelastic model like Mooney-Rivlin.
This result is in agreement with the, previously mentioned, limitations of the bond-based peridynamic model. 

Other references dealing with convergence of peridynamics models to local models as the horizon goes to zero are the following. In the  nonlinear situation, in \cite{SiLe08}, the {\it pointwise} convergence of the state-based peridynamics to classical local models is shown, but the mathematical study in the framework of $\Gamma$-convergence is still pending. For the linear case, in \cite{MeD}, the $\Gamma$-convergence of linear elastic peridynamics to the local Navier-Lam\'e system is shown. This work is extended in \cite{MeD16} to the geometrically nonlinear situation.

The outline of the paper is the following. Section 2 is devoted to preliminaries, including a summary of the $\Gamma$-convergence procedure to pass from nonlocal energies \eqref{eq:nlenergy} to the local energy \eqref{eq:lenergy} as the horizon $\delta$ goes to zero. In Section 3, frame indifference and isotropy are imposed to bond-based models, characterizing the pairwise potential densities giving rise to energies verifying those physical properties. It is also shown that frame indifference and isotropy are preserved when passing to the limit as $\delta\rightarrow 0$.
In Section 4, we perform a preliminar analysis on which stored-energy densities can be recovered when making the $\Gamma$-limit as $\delta \rightarrow 0$ of bond-based models satisfying frame indifference and isotropy.
Finally, in Section \ref{se:MR} we show that Mooney-Rivlin models are not recoverable.
For this, apart from the analysis of the previous section, we need a property of quasiconvexity theory, which states that a strict quasiconvex function can only be the quasiconvexification of itself.

\section{Preliminaries}

\subsection{Nonlinear elasticity}

Let $\O\subset \Rn$ be an open bounded set with smooth boundary. 
This domain represents a solid body before it is deformed. 
A deformed configuration of the body is the image, $u(\O)$, of a (smooth enough) mapping $u:\O\rightarrow \Rm$ called deformation.

An elastic material is called hyperelastic if there exists a function $W:\O\times \Rmn\rightarrow \R$, called stored-energy function of the material, such that the Piola-Kirchhoff stress tensor is the derivative of $W$ with respect to its second variable.
In this case, the \emph{potential elastic} energy of the deformation $u$ is
\begin{equation*}
E(u)=\int_\O W(x,Du(x))\,dx-\int_\O F(x) \cdot u(x)\,dx-\int_{\Gamma_1} g(x)\cdot u(x)\,d \mathcal{H}^{n-1}(x),\end{equation*}
where $F:\O\rightarrow \Rm$ is the distributed body load and $g:\Gamma_1\subset \partial\O\rightarrow \Rm$ the boundary load. The term $d \mathcal{H}^{n-1}$ indicates an $(n-1)$-surface integral.
Moreover, critical points of the energy are equilibirum solutions of Cauchy's equations of motion.

The stored-energy density must satisfy the frame indifference condition 
\begin{equation}\label{eq:localframeindifference}
W(x,RA)=W(x,A)
\end{equation}
for all points $x\in \O$, all matrices $A\in \Rmn$ and all rotations $R\in SO(m)=\{Q\in \R^{m \times m} \colon \det Q = 1 , \, QQ^T=I_m\}$ ($I_m$ the identity matrix of order $m$). This reflects the fact that the deformation energy does not depend on the observer. 

A hyperelastic material is called isotropic if 
\begin{equation}\label{eq:localisotropy} 
W(x,AR)=W(x,A),
\end{equation}
for all points $x\in \O$, all matrices $A\in \Rmn$ and all rotations $R\in SO(n)$. This means that the elastic energy is independent of the stretching, or loading, direction.
When $n = m = 3$, standard representation theorems establish that the hyperelastic body is isotropic if and only if, at each point, the stored-energy function depends only on $|A|^2$, $|\cof A|^2$ and $\det A$; in other words, if and only if there exists $\varphi:\O\times \R\times \R\times \R\rightarrow \R$ such that
\[W(x,A)=\varphi(x, |A|^2,|\cof A|^2, \det A).\]

A paradigmatic example of isotropic hyperelastic materials are Mooney-Rivlin materials, whose stored-energy density is given by the expression
\begin{equation}\label{eq:mr}
W(A)=\alpha |A|^2+\beta |\cof A|^2+g(\det A) , \end{equation}
with $\alpha,\beta>0$ and $g$ a given function. When $\beta=0$, we recover the Neo-Hookean materials. 

An essential reference on the mathematical theory of nonlinear elasticity is \cite{Ciarlet88}. Other references are the books \cite{Antman95,dacorogna,marsden-hughes,Pedregal00} and the survey papers \cite{Ball96,Ball02}.

\subsection{Vector variational problems}

The mathematical theory of hyperleasticity is nowadays a well established subject. In the pioneering work \cite{ball77}, an existence theory in hyperelasticity was given by means of the application of the direct method of the Calculus of Variations to the energy functional 
\begin{equation}\label{eq:localenergy}
I(u)=\int_\O W(x,Du(x))\,dx
.\end{equation}
The direct method has two main ingredients: first, the functional must be coercive, in the sense that $I(u)$ blows up as the norm of $u$ increases.
Second, $I$ must be lower semicontinuous; in our case, weak lower semicontinuous, since the relevant topology in this situation is the weak topology in a Sobolev space. Coercivity is usually guaranteed by imposing proper lower bounds on the stored-energy density $W$. More delicate is the weak lower semicontinuity, which, for integral functionals like $I$, is typically characterized in terms of convexity notions for $W$. In this situation, the relevant convexity concept is quasiconvexity (see \cite{dacorogna} and the references therein). We say that a function $\psi:\Rmn\rightarrow \R$ is \emph{quasiconvex} if and only if
\begin{equation}\label{eq:qc}
\psi(A)\le \int_{(0,1)^n} \psi(A+Dv(x))\,dx, 
\end{equation}
for all matrices $A\in \Rmn$ and test functions $v\in C_c^\infty((0,1)^n,\R^m)$. It turns out that under the standard coercivity and growth conditions, 
\[\frac{1}{C}|A|^p-C\le W(x,A)\le C(1+|A|^p), \qquad \text{a.e. } x\in \O, \quad A\in \Rmn,\]
for some $1<p<\infty$ and $C>0$, and assuming that $W(x,\cdot)$ is quasiconvex for a.e.\ $x\in\O$, the existence of minimizers for $I$ holds. 
In hyperelasticity, quasiconvexity of isotropic hyperelastic stored-energy densities is a consequence of its polyconvexity.
When $n=m=3$, we say that $W$ is polyconvex if it can be expressed as $W(x,A)=\varphi(x,A,\cof A,\det A)$ for some $\varphi$ such that  $\varphi(x,\cdot,\cdot,\cdot)$ is convex for a.e.\ $x\in \O$ (see \cite{dacorogna} for the definition of polyconvexity for general dimensions). Polyconvexity implies quasiconvexity under proper coercivity and growth conditions \cite{ball77,dacorogna}, and, therefore, it is the right convexity notion in this context. When dealing with polyconvex densities, upper bounds can be left behind and existence of minimizers is obtained just by imposing the coercivity conditions
\[\frac{1}{C}(|A|^{p_1}+|\cof A|^{p_2}+|\det A|^{p_3})-C\le W(x,A),\quad  \mbox{a.e. }x\in \O, \quad \mbox{all }A\in \Rmn,\]
for suitable exponents $p_i\ge 1$. 

Well-known references on the application of calculus of variations techniques to nonlinear elasticity are again \cite{Antman95,Ciarlet88,dacorogna,marsden-hughes,Pedregal00}.  

\subsection{Nonlinear bond-based peridynamics}

A general nonlinear energy in the framework of bond-based peridynamics takes the form 
\[E_{nl}(u)=\int_\O\int_\O w(x,x-x',u(x)-u(x'))\,dx'\,dx-\int_\O F(x)\cdot u(x)\,dx,\]
for a given deformation $u:\O\rightarrow \Rm$ \cite{Silling00, SiLe10, madenci_oterkus}. The pairwise potential function $w:\O\times \tilde \O\times \Rm\rightarrow \R$, with $\tilde{\O}=\O-\O$ (set of $x - x'$ with $x, x' \in \O$), measures the interaction between particles $x,x' \in\O$ both in the reference and deformed configurations. As the interaction force between particles is expected to increase as the distance between them decreases, it is natural to assume that the pairwise density $w(x,\cdot,\tilde y)$ blows up at the origin for each $x\in\O$ and $\tilde y\in \Rm$. Furthermore, it is also natural to assume that particles separated by a distance bigger than a parameter $\delta >0$  do not interact at all, so that $w(\cdot,\tilde x,\cdot)=0$ if $|\tilde x|>\delta$. The parameter $\delta$ is called the {\it horizon} of interaction of particles, and it is a relevant part of the nonlocal peridynamic bond-based model. In \cite{BeMC14}, the application of the direct method of the Calculus of Variations for this type of functionals was studied by some of the authors of this paper. An existence theory was obtained in the Lebesgue $L^p$ spaces under suitable growth conditions on $w$, whereas the relevant nonlocal convexity notion requires the function
\begin{equation}\label{eq:nlconvexity}
y \mapsto \int_\O w(x,x-x',y-v(x')\,dx'\end{equation}
to be convex for a.e.\ $x\in\O$ and any test function $v\in L^p(\O,\Rm)$. This condition is actually equivalent, under some technical assumptions, to the weak lower semicontinuity of the functional
\begin{equation}\label{eq:nonlocalenergy}
I_{nl}(u)=\int_\O\int_\O w(x,x-x',u(x)-u(x'))\,dx'\,dx\end{equation}
 in $L^p(\O,\Rm)$ (see \cite{BoElPoSc11,Elbau12}). The study of this nonlocal convexity notion, which is strictly weaker than usual convexity of $w(x,\tilde x,\cdot)$, has been deepened in \cite{BeMC18}, including relaxation of functionals lacking this condition (see also \cite{MoTe19} for the relaxation).

\subsection{Passage from nonlocal to local as the horizon goes to zero}\label{sec:Gammaconvergence}

It is natural to wonder whether the local energy $I$ in \eqref{eq:localenergy} can be recovered as the limit of the nonlocal energy $I_{nl}$ in \eqref{eq:nonlocalenergy} as $\delta\rightarrow 0$. The right framework to study convergence of variational problem is $\Gamma$-convergence \cite{Braides}, as, in particular, it implies convergence of minimizers and minimum energies. The $\Gamma$-convergence of nonlocal functionals $I_{nl}$ as the horizon tends to zero was studied by some of the authors of this paper in \cite{BeMCPe} in an abstract way. It was shown that under natural assumptions the $\Gamma$-limit is a local vector variational problem, and the process to construct such a $\Gamma$-limit was explicitly described. The local $\Gamma$-limit is recovered in several steps:
\begin{enumerate}[i)]
\item {\it Scaling.}\label{step 1} Making explicit the dependence of the nonlocal functional with respect to $\delta$, we include a parameter $\beta$, that will be clarified below, and scale the functional as     
	\begin{equation}\label{eq:Idelta}
     I_\delta(u):=\frac{n+\beta}{\delta^{n+\beta}}
     \int_{\O} \int_{\O \cap B(x,\d)} w(x,x-x',u(x)-u(x'))\,dx'\,dx.
     \end{equation}
\item {\it Blow-up at zero}.\label{step 2} We assume $\beta\in \R$ is such that there exists the limit
     \begin{equation}\label{eq:recension}
     w^\circ(x,\tilde{x},\tilde{y}):= \lim_{t \rightarrow 0} \frac{1}{t^\beta}w(x,t\tilde{x},t\tilde{y}), 
     \end{equation}
     for a.e.\ $x\in\O$, and all $\tilde x\in \tilde{\O}$ and $\tilde y\in \Rm$. 
\item {\it Definition of the local density $\bar{w}$.} We define $\bar{w}:\O\times \Rnn \rightarrow \mathbb{R}$ \label{step 3} as
     \begin{displaymath}
     \bar{w}(x,A):= \int_{\mathbb{S}^{n-1}} w^\circ(x,z,Az) \,d \mathcal{H}^{n-1}(z) \qquad x \in \O, \, A \in \Rnn,
     \end{displaymath}
     where $\mathbb{S}^{n-1}$ is the $n$-dimensional unit sphere.
\item {\it Quasiconvexification.}\label{step 4} The candidate a $\Gamma$-limit of $I_\delta$ as $\delta \rightarrow 0$ is then
     \begin{displaymath}
     \tilde{I}(u):=\int_{\O}\bar{w}^{qc}(x,Du (x))\,dx,
     \end{displaymath}
     where $w^{qc}(x,\cdot)$ is the quasiconvexification (\cite{dacorogna}) of $\bar{w}(x,\cdot)$ defined as
     \[\bar{w}^{qc}(x,A):=\sup\{ v(x,A): v(x,\cdot) \leq \bar{w}(x,\cdot) \text{ and } v(x,\cdot) \text{ quasiconvex} \}.\]
     
\end{enumerate}
Under several assumptions on the pairwise potential density $w$, including the nonlocal convexity \eqref{eq:nlconvexity}, it is shown in \cite{BeMCPe} that $I_\delta$ $\Gamma$-converges to $\tilde I$.

Our aim in what follows is to check whether typical stored-energy densities in hyperelasticity can be obtained by this procedure.
This amounts to asking whether hyperelastic models can be obtained as the $\Gamma$-limit of bond-based peridynamics models as the horizon of interaction of particles goes to zero.
More concretely, given a polyconvex stored-energy density $W$, whether there exists a pairwise potential function $w$ such that its corresponding functional $I_\delta$ $\Gamma$-converges to $I$ as $\delta\rightarrow 0$.

\section{Frame-indifference and isotropy in the bond-based model}

In this section we explore how frame indifference and isotropy are translated in mathematical terms into the nonlinear bond-based model.
Frame indifference requires that
\begin{equation}\label{eq:frameindifference}
w(x,\tilde{x},\tilde{y})=w(x,\tilde{x},R\tilde{y}), \qquad \text{a.e. } x\in \O, \quad \tilde{x}\in\tilde{\O}, \quad \tilde{y}\in \Rm , \quad R\in SO(m) .
\end{equation}
We are also interested in isotropic materials, i.e., those whose deformation energy does not depend on the loading, or stretching, direction. Mathematically, this is imposed in the pairwise potential function by requiring
\begin{equation}\label{eq:isotropy}
w(x,\tilde{x},\tilde{y})=w(x,R\tilde{x},\tilde{y}), \qquad \text{a.e. } x\in \O, \quad \tilde{x}\in\tilde{\O}, \quad \tilde{y}\in \Rm , \quad R\in SO(n) .
\end{equation}
The next result, which has appeared in the literature before (for instance in \cite{Silling00,SiLe10}), is straightforward.

\begin{prop}\label{prop:characterization w} The bond-based model satisfies frame indifference and isotropy (i.e., the pairwise potential function $w$ satisfies \eqref{eq:frameindifference} and \eqref{eq:isotropy}) if and only if there exists $\tilde{w}:\O\times  \R\times \R\rightarrow\R$ such that 
\[w(x,\tilde{x},\tilde{y})=\tilde{w}(x,|\tilde{x}|,|\tilde{y}|) \qquad \text{a.e. } x\in \O, \quad \tilde{x}\in\tilde{\O}, \quad \tilde{y}\in \Rm . \]
\end{prop}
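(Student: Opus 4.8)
The plan is to prove the two implications separately. The sufficiency is immediate: if $w(x,\tilde{x},\tilde{y})=\tilde{w}(x,|\tilde{x}|,|\tilde{y}|)$ for some $\tilde{w}$, then since every rotation preserves the Euclidean norm we have $|R\tilde{y}|=|\tilde{y}|$ for $R\in SO(m)$ and $|R\tilde{x}|=|\tilde{x}|$ for $R\in SO(n)$, so \eqref{eq:frameindifference} and \eqref{eq:isotropy} hold at once. The whole content therefore lies in the necessity, and there the only ingredient beyond bookkeeping is the classical fact that for $k\ge 2$ the group $SO(k)$ acts transitively on each sphere $\{v\in\R^k:|v|=r\}$, $r>0$ (and fixes the origin); that is, given $u,v\in\R^k$ with $|u|=|v|$ there is $R\in SO(k)$ with $Ru=v$.

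Assuming necessity, I would first exploit frame indifference \eqref{eq:frameindifference}. Fixing $x$ and $\tilde{x}$, condition \eqref{eq:frameindifference} says that $w(x,\tilde{x},\cdot)$ is constant on the orbits of $SO(m)$ in $\R^m$, i.e.\ on spheres centred at the origin. Concretely, fix a unit vector $e\in\R^m$ and define $w_1(x,\tilde{x},s):=w(x,\tilde{x},se)$ for $s\ge 0$; then for arbitrary $\tilde{y}$, transitivity provides $R\in SO(m)$ with $R(|\tilde{y}|\,e)=\tilde{y}$, whence $w(x,\tilde{x},\tilde{y})=w(x,\tilde{x},|\tilde{y}|\,e)=w_1(x,\tilde{x},|\tilde{y}|)$. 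Thus $w$ already depends on $\tilde{y}$ only through $|\tilde{y}|$.

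Next I would feed isotropy \eqref{eq:isotropy} into this partially reduced form. For $R\in SO(n)$ one has $w_1(x,\tilde{x},|\tilde{y}|)=w(x,\tilde{x},\tilde{y})=w(x,R\tilde{x},\tilde{y})=w_1(x,R\tilde{x},|\tilde{y}|)$, so for each fixed $s\ge0$ the map $\tilde{x}\mapsto w_1(x,\tilde{x},s)$ is $SO(n)$-invariant; applying the transitivity fact again, now in $\R^n$, and choosing a unit vector $e'\in\R^n$, the definition $\tilde{w}(x,r,s):=w_1(x,re',s)$ yields $w_1(x,\tilde{x},s)=\tilde{w}(x,|\tilde{x}|,s)$, and therefore $w(x,\tilde{x},\tilde{y})=\tilde{w}(x,|\tilde{x}|,|\tilde{y}|)$, as desired. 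Since the two invariances act on the independent variables $\tilde{y}$ and $\tilde{x}$, the order in which they are applied is irrelevant and no compatibility issue arises. The only genuine caveat — and the one point I would state carefully — is the hypothesis $k\ge 2$ needed for transitivity on spheres: in the physically relevant case $n=m=3$ (and indeed whenever $n,m\ge 2$) the argument goes through verbatim, whereas for $n=1$ or $m=1$ the corresponding rotation group is trivial and the reduction in that variable must be read as vacuous. This is the only step where anything could go wrong, and it is the reason the result is called straightforward.
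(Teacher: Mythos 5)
Your proof is correct. The paper itself offers no proof of this proposition --- it is stated as ``straightforward'' with a pointer to the literature --- and your argument (sufficiency from invariance of the norm under rotations; necessity from transitivity of $SO(m)$ and $SO(n)$ on spheres, applied successively to the $\tilde{y}$ and $\tilde{x}$ variables to define $\tilde{w}(x,r,s):=w(x,re',se)$) is precisely the routine argument the authors are alluding to. Your caveat that the ``only if'' direction genuinely requires $n,m\ge 2$ (since $SO(1)$ is trivial, so for $n=1$ the function could still depend on the sign of $\tilde{x}$) is a valid observation that the paper leaves implicit, harmless in the physically relevant case $n=m=3$.
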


An interesting question is whether frame indifference and isotropy are inferred to the $\Gamma$-limit obtained as the horizon goes to zero. The answer to this question is positive, as the next result shows. 

\begin{prop}\label{prop:passing to the limit in the physical properties}
Given a pairwise potential function $w$, assume there exists $\beta\in\R$ such that the limit in \eqref{eq:recension} exists and the function $w^\circ$ may be defined. If the pairwise potential function $w$ satisfies \eqref{eq:frameindifference} and \eqref{eq:isotropy}, then the function $W$, obtained from $w$ by the procedure described in Section \ref{sec:Gammaconvergence} ($W=\bar{w}^{qc}$), satisfies \eqref{eq:localframeindifference} (frame indiference) and \eqref{eq:localisotropy} (isotropy).
\end{prop}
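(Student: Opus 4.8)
The plan is to follow the construction $W=\bar{w}^{qc}$ stage by stage, checking that the blow-up, the spherical average, and finally the quasiconvexification each preserve frame indifference and isotropy; the envelope step is the only one requiring care.

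First I would push the two invariances through the blow-up \eqref{eq:recension}. Since $w$ satisfies both \eqref{eq:frameindifference} and \eqref{eq:isotropy}, Proposition \ref{prop:characterization w} gives $w(x,\tilde{x},\tilde{y})=\tilde{w}(x,|\tilde{x}|,|\tilde{y}|)$, so for each fixed $x$ the rescaled function $t^{-\beta}w(x,t\tilde{x},t\tilde{y})=t^{-\beta}\tilde{w}(x,t|\tilde{x}|,t|\tilde{y}|)$ depends on $(\tilde{x},\tilde{y})$ only through $(|\tilde{x}|,|\tilde{y}|)$. Letting $t\to 0$, the same holds for $w^\circ$, i.e.\ $w^\circ(x,\tilde{x},\tilde{y})=h(x,|\tilde{x}|,|\tilde{y}|)$ for some $h$. (Equivalently, one reads \eqref{eq:frameindifference} and \eqref{eq:isotropy} directly off the definition of $w^\circ$, since the rotation $R$ commutes with the scalar factor $t$.)

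Next I would transfer the invariances to $\bar{w}$. For $R\in SO(m)$ frame indifference is immediate because $|RAz|=|Az|$ for every $z$, so $\bar{w}(x,RA)=\int_{\mathbb{S}^{n-1}}h(x,1,|RAz|)\,d\mathcal{H}^{n-1}(z)=\bar{w}(x,A)$. For $R\in SO(n)$ I would invoke the rotation invariance of the surface measure: the change of variables $\zeta=Rz$ maps $\mathbb{S}^{n-1}$ onto itself preserving $\mathcal{H}^{n-1}$, whence $\bar{w}(x,AR)=\int_{\mathbb{S}^{n-1}}h(x,1,|ARz|)\,d\mathcal{H}^{n-1}(z)=\int_{\mathbb{S}^{n-1}}h(x,1,|A\zeta|)\,d\mathcal{H}^{n-1}(\zeta)=\bar{w}(x,A)$. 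Thus $\bar{w}(x,\cdot)$ already satisfies \eqref{eq:localframeindifference} and \eqref{eq:localisotropy}.

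The main point is that quasiconvexification preserves both invariances, and the key lemma is that composition with the group action preserves quasiconvexity. If $\psi$ is quasiconvex in the sense of \eqref{eq:qc}, then $A\mapsto\psi(RA)$ with $R\in SO(m)$ is quasiconvex because $R\,Dv=D(Rv)$ and $Rv$ is again an admissible test field; and $A\mapsto\psi(AR)$ with $R\in SO(n)$ is quasiconvex as well, which I would prove via the substitution $\zeta=R^{T}y$ in the defining integral, using that $R^{T}$ is a volume-preserving linear map of $(0,1)^n$ onto the rotated cube $R^{T}(0,1)^n$ together with the domain-independence of the quasiconvexity inequality (\cite{dacorogna}). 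Granting this, fix $x$ and let $\mathcal{S}$ be the family of quasiconvex minorants of $\bar{w}(x,\cdot)$ defining $\bar{w}^{qc}$. For $R\in SO(n)$ the map sending $v$ to the function $A\mapsto v(AR)$ carries $\mathcal{S}$ bijectively onto itself: it preserves quasiconvexity by the lemma, and $v(AR)\le\bar{w}(x,AR)=\bar{w}(x,A)$ by the invariance of $\bar{w}$ just established (its inverse being induced by $R^{T}\in SO(n)$). Taking suprema, $\bar{w}^{qc}(x,AR)=\sup_{v\in\mathcal{S}}v(AR)=\sup_{v\in\mathcal{S}}v(A)=\bar{w}^{qc}(x,A)$, which is \eqref{eq:localisotropy}; the identical argument with $v\mapsto v(R\,\cdot)$ and $R\in SO(m)$ gives \eqref{eq:localframeindifference}. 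I expect the only delicate step to be the preservation of quasiconvexity under right multiplication by $R\in SO(n)$, since there the rotation acts on the reference variable and must be transferred onto the test domain; the left-multiplication case and the envelope bookkeeping are routine.
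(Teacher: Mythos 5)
Your proof is correct, and through the construction of $\bar{w}$ it coincides with the paper's: both use Proposition \ref{prop:characterization w} to write $w$ (hence $w^\circ$) as a function of $(x,|\tilde{x}|,|\tilde{y}|)$, and both then get invariance of $\bar{w}$ under $A\mapsto R_1AR_2$ from $|R_1Az|=|Az|$ together with the rotation invariance of $\mathcal{H}^{n-1}$ on $\mathbb{S}^{n-1}$. The only divergence is the last step: the paper disposes of the quasiconvexification by citing \cite[Th.\ 6.14]{dacorogna}, which states precisely that the quasiconvex envelope inherits frame indifference and isotropy, whereas you prove this from scratch. Your argument for that step is sound: left multiplication preserves quasiconvexity because $R\,Dv=D(Rv)$ keeps the test field admissible; right multiplication requires transferring the rotation to the domain via the volume-preserving change of variables and the fact that the quasiconvexity inequality \eqref{eq:qc} may be tested on any bounded open domain (in particular the rotated cube), which you correctly flag as the delicate point; and the envelope bookkeeping --- the group action permutes the family of quasiconvex minorants of the invariant function $\bar{w}(x,\cdot)$, so the supremum is invariant --- is exactly right. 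What your route buys is self-containedness (no appeal to the representation theorem in \cite{dacorogna}); what the paper's route buys is brevity. Mathematically the content of your lemma is the content of the cited theorem, so the two proofs are equivalent in substance.
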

\begin{proof}
We prove frame indifference and isotropy of $W$ all at once, but we emphasize that any of those properties of $W$ is inferred independently from the corresponding property of $w$. By Proposition \ref{prop:characterization w}, there exists $\tilde w$ such that
\[w(x,\tilde{x},\tilde{y})=\tilde{w}(x,|\tilde{x}|,|\tilde{y}|), \qquad \text{a.e. } x\in \O, \quad \tilde{x}\in\tilde{\O}, \quad \tilde{y}\in \Rm .\]
By assumption, there exists $\beta\in\R$ such that 
\[w^\circ(x,\tilde{x},\tilde{y}):= \lim_{t \rightarrow 0} \frac{1}{t^\beta}w(x,t\tilde{x},t\tilde{y}), \qquad \text{a.e. } x\in \O, \quad \tilde{x}\in\tilde{\O}, \quad \tilde{y}\in \Rm . \]     
Then 
\[w^\circ(x,\tilde{x},\tilde{y})=\lim_{t\rightarrow 0} \frac{1}{t^\beta} \tilde{w}(x,t|\tilde{x}|,t|\tilde{y}|), \qquad \text{a.e. } x\in \O, \quad \tilde{x}\in\tilde{\O}, \quad \tilde{y}\in \Rm ,\]
and we write, with a small abuse of language, that 
\[w^\circ=w^\circ(x,|\tilde{x}|,|\tilde{y}|).\]
Given two rotations $R_1 \in SO(m)$ and $R_2\in SO(n)$, 
\begin{align*}
\bar{w}(x,R_1AR_2) & =\int_{\mathbb{S}^{n-1}} w^\circ(x,|z|,|R_1AR_2z|)\,d\mathcal{H}^{n-1}(z) \\
&=\int_{\mathbb{S}^{n-1}} w^\circ(x,|z|,|AR_2z|)\,d\mathcal{H}^{n-1}(z) \\
&= \int_{\mathbb{S}^{n-1}} w^\circ(x,|R_2^{-1}z|,|Az|)\,d\mathcal{H}^{n-1}(z) \\
&=\int_{\mathbb{S}^{n-1}} w^\circ(x,|z|,|Az|)\,d\mathcal{H}^{n-1}(z) \\
&=\bar{w}(x,A),\end{align*}
hence $\bar w$ satisfies \eqref{eq:localframeindifference} and \eqref{eq:localisotropy}, and, therefore, by \cite[Th.\ 6.14]{dacorogna}, so does its quasiconvexification $\bar{w}^{qc}=W$. 
\end{proof}

\section{Which local densities can be recovered from nonlocal ones?}

In this section we make it explicit the relationship between a pairwise potential function $w$ and the stored-energy function obtained from it by the previous $\Gamma$-convergence procedure, in the presence of frame indifference and isotropy.
We obtain an explicit identity involving those two functions, in such a way that given a stored-energy function $W$, any candidate $w$ to pairwise potential function such that the corresponding sequence of nonlocal functionals $\Gamma$-converges to the local functional given by $W$ must satisfy. Thus, it provides a criterium in order to check whether a local functional with energy density $W$ may be obtained as the $\Gamma$-limit of functionals like \eqref{eq:Idelta}. There is no doubt this is interesting from a mathematical point of view, but also from a mechanical perspective, since it permits to answer whether local hyperelastic energies are the $\Gamma$-limit of nonlocal bond-based nonlinear models as the horizon goes to zero. 

For the next result we consider a pairwise potential function $w$ verifying the frame indifference and isotropy properties, and for simplicity in the exposition we assume that the material is homogeneous, i.e., $w$ does not depend on the material point $x$, so that $w=w(\tilde{x},\tilde{y})$. By Proposition \ref{prop:characterization w}, there exists $\tilde{w}: \R\times\R\rightarrow \R$ such that 
\[w(\tilde{x},\tilde{y})=\tilde{w}(|\tilde x|,|\tilde y|),\qquad  \tilde{x}\in\tilde{\O}, \quad \tilde{y}\in \Rm.\]
Note that step \ref{step 2}) of Section \ref{sec:Gammaconvergence} makes $w^{\circ}$ a homogeneous function of degree $\b$ in the pair $(\tilde{x}, \tilde{y})$, i.e.,  $w^{\circ} (x, t \tilde{x}, t \tilde{y}) = t^{\beta} w^{\circ} (x, \tilde{x}, \tilde{y})$.
Therefore, we may assume, without loss of generality, that $w$ is itself homogeneous of degree $\beta$, which amounts to saying that $w=w^\circ$, with $w^\circ$ defined by \eqref{eq:recension}. Then, according to Section \ref{sec:Gammaconvergence}, the density of the $\Gamma$-limit of $I_\delta$ is the quasiconvexification of the function
\[
 \bar{w}(A) =\int_{\mathbb{S}^{n-1}} w(z,Az)\,d\mathcal{H}^{n-1}(z)=\int_{\mathbb{S}^{n-1}} \tilde{w}(|z|,|Az|)\,d\mathcal{H}^{n-1}(z) =\int_{\mathbb{S}^{n-1}} \tilde{w}(1,|Az|)\,d\mathcal{H}^{n-1}(z).
\]
Notice that the dependence of $w$ on $\tilde x$ is irrelevant in order to obtain $\bar w$.

The above process motivates the following definition of recoverable function.

\begin{definicion}
The function $W : \Rmn \to \R$ is recoverable if there exist $\bar{w} : \Rmn \to \R$ and $\tilde{w} : \{1\} \times [0, \infty) \to \R$
such that
\begin{equation}\label{eq:recover}
  \bar{w}(A) = \int_{\mathbb{S}^{n-1}} \tilde{w}(1,|Az|)\,d\mathcal{H}^{n-1} (z), \qquad A \in \Rmn
\end{equation}
and $W = \bar{w}^{qc}$.
\end{definicion}

The following result gives a necessary and sufficient condition for a $\bar{w}$ to satisfy \eqref{eq:recover} without invoking $\tilde{w}$.
We denote by $\s_{n-1}$ the $\mathcal{H}^{n-1}$ measure of $\mathbb{S}^{n-1}$, and by $\fint$ the mean integral.

\begin{prop}\label{pr:recover}
The function $\bar{w}$ satisfies \eqref{eq:recover} if and only if for every $A \in \Rnn$ one has
\begin{equation}\label{eq:wrecover}
 \bar{w}(A) = \fint_{\mathbb{S}^{n-1}} \bar{w} (|Az| I )\,d\mathcal{H}^{n-1}(z) .
\end{equation}
In this case, a function $\tilde{w}$ giving rise to \eqref{eq:recover} is 
\begin{equation}\label{eq:tildewdef}
 \tilde{w} (1, t) = \frac{\bar{w} (t I)}{\s_{n-1}} , \qquad t \geq 0 .
\end{equation}
\end{prop}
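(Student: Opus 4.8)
The plan is to prove both implications by isolating the single-variable function $\tilde w$ through a judicious choice of the matrix argument, namely $A = tI$, where $I \in \Rmn$ is a fixed matrix with orthonormal columns (so that $|Iz| = |z|$ for all $z$; e.g.\ the identity matrix when $m = n$). The entire argument rests on the elementary observation that for $z \in \mathbb{S}^{n-1}$ one has $|tIz| = t|z| = t$, so that inserting $A = tI$ into the integral representation \eqref{eq:recover} collapses the sphere integral to a constant.

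For the direct implication, I would assume \eqref{eq:recover} holds for some $\tilde w$ and first evaluate it at $A = tI$. By the observation above,
\[
 \bar w(tI) = \int_{\mathbb{S}^{n-1}} \tilde w(1, |tIz|)\, d\mathcal{H}^{n-1}(z) = \int_{\mathbb{S}^{n-1}} \tilde w(1, t)\, d\mathcal{H}^{n-1}(z) = \s_{n-1}\, \tilde w(1, t),
\]
which both forces \eqref{eq:tildewdef} and shows that $\tilde w$ is uniquely determined by $\bar w$. Substituting $\tilde w(1, |Az|) = \bar w(|Az|I)/\s_{n-1}$ back into \eqref{eq:recover} then gives
\[
 \bar w(A) = \int_{\mathbb{S}^{n-1}} \frac{\bar w(|Az| I)}{\s_{n-1}}\, d\mathcal{H}^{n-1}(z) = \fint_{\mathbb{S}^{n-1}} \bar w(|Az| I)\, d\mathcal{H}^{n-1}(z),
\]
which is precisely \eqref{eq:wrecover}.

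For the converse, I would take \eqref{eq:wrecover} as a hypothesis and simply define $\tilde w$ by \eqref{eq:tildewdef}, i.e.\ $\tilde w(1, t) := \bar w(tI)/\s_{n-1}$. Inserting this definition into the right-hand side of \eqref{eq:recover} and recognising the mean integral yields
\[
 \int_{\mathbb{S}^{n-1}} \tilde w(1, |Az|)\, d\mathcal{H}^{n-1}(z) = \fint_{\mathbb{S}^{n-1}} \bar w(|Az| I)\, d\mathcal{H}^{n-1}(z) = \bar w(A),
\]
the last equality being exactly \eqref{eq:wrecover}. Hence $\bar w$ admits the representation \eqref{eq:recover} with this $\tilde w$, which closes the equivalence.

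There is no serious analytic obstacle here: the proof is a direct computation requiring no measure-theoretic or regularity hypotheses on $\bar w$ or $\tilde w$ beyond what makes the integrals meaningful. The only point deserving care is the role of the probe matrix $I$: it must satisfy $|Iz| = |z|$ so that evaluating at $tI$ genuinely recovers the profile $t \mapsto \tilde w(1, t)$ over all of $[0, \infty)$ (each $t \geq 0$ is realised as $|tIz|$ for every $z \in \mathbb{S}^{n-1}$), and this is what guarantees that \eqref{eq:tildewdef} determines $\tilde w$ on its whole domain rather than only on the range of $|A\,\cdot\,|$ for a single $A$.
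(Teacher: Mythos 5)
Your proof is correct and takes essentially the same route as the paper's: both identify $\tilde{w}(1,t)=\bar{w}(tI)/\sigma_{n-1}$ by evaluating the representation \eqref{eq:recover} at scalar multiples of the identity (the paper plugs in the matrix $|Az|I$ for fixed $z$, you plug in $tI$ and then set $t=|Az|$), and then substitute back to get the equivalence, with the converse handled identically by defining $\tilde{w}$ via \eqref{eq:tildewdef}. Your added remarks --- that $\tilde{w}$ is uniquely determined, and that the probe matrix must have orthonormal columns when $m\neq n$ --- are harmless elaborations of the same argument.
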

\begin{proof}
Assume $\bar{w}$ satisfies \eqref{eq:recover} for some $\tilde{w}$.
Fixed $z \in \mathbb{S}^{n-1}$, by \eqref{eq:recover} we have
\[
 \bar{w}(|Az| I) = \int_{\mathbb{S}^{n-1}} \tilde{w}(1,  \left| |Az| I z' \right|)\,d\mathcal{H}^{n-1}(z') = \int_{\mathbb{S}^{n-1}} \tilde{w}(1,  |Az| )\,d\mathcal{H}^{n-1}(z') = \s_{n-1} \tilde{w}(1,  |Az| ) ,
\]
hence, combining this with \eqref{eq:recover} we obtain \eqref{eq:wrecover}.

Conversely, assuming that \eqref{eq:wrecover} holds we define $\tilde{w}$ as \eqref{eq:tildewdef} and we readily obtain \eqref{eq:recover}.
\end{proof}

What formula \eqref{eq:wrecover} says is that $\bar{w}$ is determined just by its values in matrices multiples of the identity.

We implement Proposition \ref{pr:recover} to find several examples of stored-energy functions that come or do not come from a $\tilde{w}$ as in \eqref{eq:recover}. 

\begin{exa} The functional 
\[I_\delta(u)=\frac{n}{\delta^n}\int_\O\int_\O \frac{n}{\sigma_{n-1}} \frac{|u(x)-u(x')|^2}{|x-x'|^2}\,dx'\,dx,\quad u\in L^2(\O,\Rm)\]
$\Gamma$-converges as $\delta\rightarrow 0$ to 
\[I(u)=\int_\O|\nabla u(x)|^2\,dx,\quad u\in H^1(\O, \Rm).\]

This assertion is justified by the result of Section \ref {sec:Gammaconvergence} (noting that $\beta=0$ in this case), formula \eqref{eq:wrecover} and the following simple computation:
\begin{align*}
\bar{w}(A) & =\int_{\mathbb{S}^{n-1}} \frac{n}{\sigma_{n-1}} |Az|^2\, d\mathcal{H}^{n-1}(z)={n} \sum_{i=1}^m \fint_{\mathbb{S}^{n-1}} \left( \sum_{j=1}^n A_{ij}z_j\right)^2\,d\mathcal{H}^{n-1}(z)\\
& ={n} \sum_{i=1}^m \sum_{j,k=1}^n\fint_{\mathbb{S}^{n-1}}  A_{ij}A_{ik}z_j z_k\, d\mathcal{H}^{n-1}(z)=  \sum_{i=1}^m \sum_{j=1}^n A_{ij}^2\\
&= |A|^2 .
\end{align*}
\end{exa}

The following result shows in particular that $|A|^p$ does not satisfy \eqref{eq:wrecover} for $p \notin \{ 0, 2 \}$.
Indeed, as a consequence of the previous and next examples we have that the only functions of $|A|^2$ that satisfy \eqref{eq:wrecover} are affine functions of $|A|^2$. 

\begin{exa}\label{ex:A}
Let $g : [0, \infty) \to \R$ be a strictly convex, or strictly concave, function on a subinterval $I\subset [0,+\infty)$.
Then the function $g (|A|^2)$ does not satisfy \eqref{eq:wrecover}.

Indeed, assume, for a contradiction, that formula \eqref{eq:wrecover} holds.
Then,
\[
 g (|A|^2) = \fint_{\mathbb{S}^{n-1}} g \left( \left| |Az| I \right|^2 \right) d\mathcal{H}^{n-1}(z) = \fint_{\mathbb{S}^{n-1}} g \left( n |Az|^2 \right) d\mathcal{H}^{n-1}(z) .
\]
Now, we take any $A$ such that the set $\{|Az| \colon z \in \mathbb{S}^{n-1}\}$ contains more than one point, and such that $\{ n|A z|^2 \colon z \in \mathbb{S}^{n-1} \}\subset I$. Then, by Jensen's inequality, using that $g$ is strictly convex, we obtain
\[
 \fint_{\mathbb{S}^{n-1}} g \left( n |Az|^2 \right) d\mathcal{H}^{n-1}(z) > g \left( n \fint_{\mathbb{S}^{n-1}} |Az|^2 \, d\mathcal{H}^{n-1}(z) \right) = g (|A|^2) ,
\]
which is a contradiction.
If $g$ is strictly concave, the inequality above is reversed and we also obtain a contradiction.
\end{exa}

The following example shows in particular that $|\cof A|^p$ does not satisfy \eqref{eq:wrecover} for $p \geq 1$.

\begin{exa}\label{ex:cofA}
Fix $n=m=3$.
Let $g: [0, \infty) \to \R$ be a convex function such that $\limsup_{t \to \infty} g(t) = \infty$.
Then the function $\bar{w}(A)=g (|\cof A|)$ does not satisfy \eqref{eq:wrecover}. 

Indeed, given $A \in \R^{3 \times 3}$ and $z \in \mathbb{S}^2$ we have that $\left| \cof (|Az| I) \right| = \sqrt{3} |Az|^2$, so if formula \eqref{eq:wrecover} holds then, by Jensen's inequality,
\[
 g (|\cof A|) = \fint_{\mathbb{S}^2} g \left( \sqrt{3} |Az|^2 \right) d\mathcal{H}^2 (z) \geq g \left( \sqrt{3} \fint_{\mathbb{S}^2 } |Az|^2 \, d\mathcal{H}^2 (z) \right) = g \left( \frac{1}{\sqrt{3}} |A|^2 \right) .
\]
Now fix $\l >0$ and consider $A$ as the matrix with diagonal elements $\l, \l^{-1}, 1$.
Then, the inequality
\[
g (|\cof A|) \geq g \left( 3^{-1/2} |A|^2 \right)
\]
reads as
\[
g \left( \sqrt{\l^2 + \l^{-2} + 1} \right) \geq g \left( 3^{-1/2} (\l^2 + \l^{-2} + 1) \right) ,
\]
which amounts to saying that
\[
g \left( t \right) \geq g \left( 3^{-1/2} t^2 \right) , \qquad t \geq \sqrt{3} . 
\]
Fix $t_0 \geq \sqrt{3}$.
Then
\[
 \max_{[t_0, 3^{-1/2} t_0^2]} g \geq \max_{[3^{-1/2} t_0^2, 3^{-3/2} t_0^4]} g .
\]
Repeating this argument and applying induction we find that
\[
 \max_{[t_0, 3^{-1/2} t_0^2]} g \geq \max_{\left[ t_0, \sqrt{3} \left( \frac{t_0}{\sqrt{3}} \right)^{2^n} \right]} g
\]
for all $n \in \N$.
Taking $t_0 = 2 \sqrt{3}$ we obtain that
\[
 \max_{[2 \sqrt{3}, 4 \sqrt{3}]} g \geq \max_{\left[ 2 \sqrt{3}, 2^{2^n} \sqrt{3} \right]} g .
\]
Consequently, 
\[
 \max_{[2 \sqrt{3}, 4 \sqrt{3}]} g \geq \sup_{\left[ 2 \sqrt{3}, \infty \right)} g ,
\]
which contradicts the assumption $\limsup_{t \to \infty} g(t) = \infty$.
\end{exa}
In Example \ref{ex:cofA}, the convexity hypothesis on $g$ may be relaxed to $g$ being convex on an interval $[a,\infty)$ for some $a>0$.

A similar reasoning can be done with $g (\det A)$, but we postpone to the next section a more definitive result.

\section{Mooney-Rivlin materials are not recoverable}\label{se:MR}

By small adaptations of the arguments of Examples \ref{ex:A} and \ref{ex:cofA}, one can exhibit large families of stored-energy functions $\bar{w}$ that do not satisfy \eqref{eq:wrecover}.
Those examples by themselves do not prove that they are not recoverable.
Without the aim of being exhaustive, we present in this section the fact that Mooney-Rivlin materials are not recoverable.
In order to do that, we will use the following sufficient condition for which equality $W = \bar{w}$ in the procedure of Section \ref{sec:Gammaconvergence} holds.
This result is possibly known for experts in quasiconvexity, but we have not found a reference of it.
First we need the definition of strict quasiconvexity.
We say that a function $\psi:\Rmn\rightarrow \R$ is \emph{strictly quasiconvex} if and only if
\begin{equation}\label{eq:qc}
\psi(A) < \int_{(0,1)^n} \psi(A+Dv(x))\,dx, 
\end{equation}
for all matrices $A\in \Rmn$ and test functions $v\in C_c^\infty((0,1)^n,\R^m) \setminus \{ 0 \}$.

The definition of strict polyconvexity is as follows. We say that $\psi: \Rmn \rightarrow \R$ is \emph{strictly polyconvex} if there exists a strictly convex function $g$ defined in the set of minors of $\Rmn$ matrices such that $\psi (A) = g (M(A))$ for all $A \in \Rmn$, where $M(A)$ is the vector formed by all minors of the matrix $A$ is a given order.

The following sufficient condition for strict quasiconvexity is useful.

\begin{prop}\label{pr:spcsqc}
Any strictly polyconvex function is strictly quasiconvex.
\end{prop}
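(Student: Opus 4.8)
The plan is to unwind the definitions and reduce strict quasiconvexity to strict convexity of the function $g$ witnessing strict polyconvexity. Let $\psi(A) = g(M(A))$ where $M(A)$ is the vector of all minors of $A$ and $g$ is strictly convex. Fix $A \in \Rmn$ and a test function $v \in C_c^\infty((0,1)^n, \Rm) \setminus \{0\}$. I would write $u(x) = Ax + v(x)$, so that $Du(x) = A + Dv(x)$, and the goal is to show
\[
 \int_{(0,1)^n} \psi(A + Dv(x))\,dx = \int_{(0,1)^n} g(M(A + Dv(x)))\,dx > g(M(A)) = \psi(A).
\]

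The key structural fact I would invoke is that each minor is a null Lagrangian: for every minor $M_k$ and every $v \in C_c^\infty((0,1)^n,\Rm)$,
\[
 \fint_{(0,1)^n} M_k(A + Dv(x))\,dx = M_k(A),
\]
i.e., the mean value of each component of $M(A + Dv)$ over the unit cube equals the corresponding component of $M(A)$. This is the standard weak-continuity/integration-by-parts property of minors (see \cite{dacorogna,ball77}). Granting this, I would apply Jensen's inequality to the strictly convex function $g$: since $M(A+Dv(\cdot))$ is a vector-valued function whose average over $(0,1)^n$ is exactly $M(A)$, Jensen gives
\[
 \fint_{(0,1)^n} g(M(A + Dv(x)))\,dx \geq g\!\left( \fint_{(0,1)^n} M(A + Dv(x))\,dx \right) = g(M(A)),
\]
which already recovers ordinary quasiconvexity. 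The whole point is to upgrade this to a strict inequality, and this is where the main obstacle lies.

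The hard part will be justifying strictness in Jensen's inequality, because strict convexity of $g$ alone yields strict inequality only when the integrand $M(A + Dv(\cdot))$ is genuinely non-constant (a.e.); if $M(A+Dv)$ happened to equal the constant $M(A)$ almost everywhere, Jensen would be an equality even for strictly convex $g$. So I must rule out that $v \neq 0$ can force $M(A+Dv) \equiv M(A)$. The cleanest route is to isolate the first-order minors: the entries of the matrix $A + Dv$ are themselves among the components of $M(A+Dv)$, so $M(A+Dv) \equiv M(A)$ would force $Dv \equiv 0$ on $(0,1)^n$; since $v$ has compact support in the connected open cube, $Dv \equiv 0$ implies $v$ is constant, hence $v \equiv 0$, contradicting $v \neq 0$. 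Thus for any admissible $v \neq 0$ the vector field $M(A+Dv(\cdot))$ is non-constant on a set of positive measure, and the strict convexity of $g$ then makes Jensen strict, giving exactly \eqref{eq:qc} and proving that $\psi$ is strictly quasiconvex.

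For the write-up I would first recall the null Lagrangian identity for minors as a citation to \cite{dacorogna}, then state the vector Jensen inequality, and finally spend the bulk of the argument on the strictness step above, taking care that the ``non-constant integrand'' condition for strict Jensen is handled rigorously (e.g.\ by noting that equality in Jensen for a strictly convex $g$ forces the argument to be constant a.e.). A minor technical point to verify is that $g$ need only be defined and strictly convex on the relevant range; since minors are polynomial in the entries, no integrability issue arises for smooth compactly supported $v$.
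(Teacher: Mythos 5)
Your proposal is correct and follows essentially the same route as the paper's own proof: the null Lagrangian identity $\int_{(0,1)^n} M(A+Dv(x))\,dx = M(A)$ followed by Jensen's inequality for the strictly convex $g$. In fact, your explicit handling of the strictness step---ruling out $M(A+Dv(\cdot))\equiv M(A)$ a.e.\ by observing that the first-order minors are the matrix entries themselves, so this would force $Dv\equiv 0$ and hence $v\equiv 0$ by compact support---fills in a detail that the paper's proof leaves implicit when it asserts the strict inequality.
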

\begin{proof}
Let $\psi: \Rmn \rightarrow \R$ be strictly polyconvex, and let $g$ be strictly convex such that $\psi (A) = g (M(A))$ for all $A \in \Rmn$.
Fix $A \in \Rmn$ and $v\in C^\infty_c((0,1)^n,\R^m) \setminus \{0\}$.
First of all, since $M$ is quasiaffine then, by a well known result \cite{dacorogna},
\[\int_{(0,1)^n} M(A+D v(x))\,dx=M(A) .\]
Now, applying Jensen's inequality, and having into account that $g$ is strictly convex, we have that 
\begin{align*}
\int_{(0,1)^n} g(M(A+D v(x)))\,dx & >g\left( \int_{(0,1)^n} M(A+D v(x))\,dx \right) =g(M(A)) .
\end{align*}
Consequently, $\psi$ is strictly quasiconvex. 
\end{proof}

The result that we seek guaranteeing the equality $W = \bar{w}$ is the following.

\begin{prop}\label{pr:sqc}
Let $W : \Rmn \to \R$ be strictly quasiconvex and let $\bar{w} : \Rmn \to \R$ be such that $W = \bar{w}^{qc}$.
Then $W = \bar{w}$.
\end{prop}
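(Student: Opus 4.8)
The plan is to combine the variational (infimum) description of the quasiconvex envelope with the strictness hypothesis on $W$. The easy half of the conclusion is immediate: since $W=\bar w^{qc}$ is by definition the largest quasiconvex function lying below $\bar w$, and $\bar w$ is itself an upper bound for every such competitor, one has $W\le\bar w$ everywhere. It therefore suffices to exclude a strict inequality $W(A_0)<\bar w(A_0)$ at some $A_0\in\Rmn$, and I would argue by contradiction from the assumption that such an $A_0$ exists.

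The next ingredient is the representation of the quasiconvexification as an infimum over test functions,
\[
 W(A_0)=\bar w^{qc}(A_0)=\inf\left\{\fint_{(0,1)^n}\bar w(A_0+Dv(x))\,dx : v\in C_c^\infty((0,1)^n,\R^m)\right\},
\]
valid under the continuity and growth conditions that $\bar w$ inherits from its construction in Section \ref{sec:Gammaconvergence}. Using the quasiconvexity of $W$ together with $W\le\bar w$ one has, for every $v$,
\[
 W(A_0)\le\fint_{(0,1)^n}W(A_0+Dv)\le\fint_{(0,1)^n}\bar w(A_0+Dv),
\]
and taking the infimum over $v$ squeezes everything to $W(A_0)$; in particular the displayed infimum of $\bar w$ equals $W(A_0)$.

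If that infimum were attained by some $v^\ast$, the argument would close at once: $v^\ast\neq 0$ would give $\fint\bar w(A_0+Dv^\ast)\ge\fint W(A_0+Dv^\ast)>W(A_0)$ by strict quasiconvexity, a contradiction, forcing $v^\ast=0$ and hence $W(A_0)=\bar w(A_0)$, against our assumption. To secure attainment rigorously I would pass to homogeneous gradient Young measures with barycenter $A_0$: weak-$*$ compactness of this class (with moments controlled by the growth of $\bar w$) and lower semicontinuity of $\nu\mapsto\langle\nu,\bar w\rangle$ produce a measure $\nu^\ast$ with $\langle\nu^\ast,\bar w\rangle=W(A_0)$. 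Then
\[
 W(A_0)=\langle\nu^\ast,\bar w\rangle\ge\langle\nu^\ast,W\rangle\ge W(A_0),
\]
by $\bar w\ge W$ and Jensen's inequality for the quasiconvex $W$, so $\langle\nu^\ast,\bar w-W\rangle=0$ with $\bar w-W\ge 0$ and $\langle\nu^\ast,W\rangle=W(\bar\nu^\ast)$. It then remains to show that strict quasiconvexity forces $\nu^\ast=\delta_{A_0}$, which yields $W(A_0)=\langle\nu^\ast,\bar w\rangle=\bar w(A_0)$ and the desired contradiction.

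The main obstacle is exactly this last rigidity step. The definition of strict quasiconvexity only supplies the strict inequality $\fint W(A_0+Dv)>W(A_0)$ for each fixed $v\neq 0$, and such strict inequalities can degenerate to equality along a minimizing sequence $v_k\rightharpoonup 0$; the proof stands or falls on upgrading \emph{strict for every fixed test function} to \emph{strict for the limiting object}, i.e.\ on knowing that a strictly quasiconvex $W$ obeys Jensen's inequality strictly on every non-trivial homogeneous gradient Young measure. I would settle this either by quoting (or proving) that equivalence, or, in the situation actually needed for Mooney--Rivlin where $W$ is strictly polyconvex by Proposition \ref{pr:spcsqc}, by reading off strict Jensen directly from the strict convexity of the function of the minors, since the minors are quasiaffine and hence their $\nu^\ast$-averages coincide with their values at $A_0$.
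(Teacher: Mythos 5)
Your proposal follows essentially the same route as the paper's proof: both establish $W=\bar w^{qc}\le\bar w$, invoke the representation of the quasiconvexification as an attained minimum over homogeneous gradient Young measures $\nu\in\mathcal{Y}(A)$ (quoting Pedregal), and then use strict Jensen's inequality for non-Dirac measures to force $\nu=\delta_A$, hence $\bar w(A)=W(A)$. The ``main obstacle'' you single out --- upgrading strictness for each fixed test function to strict Jensen for every non-trivial homogeneous gradient Young measure --- is exactly the step the paper dispatches by asserting as known the equivalence between the two formulations of strict quasiconvexity, so your attempt is on par with the published argument; moreover, your fallback for the strictly polyconvex case (the minors are quasiaffine, so $\langle\nu,M\rangle=M(A)$, and strict convexity of $g$ then forces $\nu$ to be a Dirac mass) is a correct, self-contained way to close that step in the only situation the paper actually needs it, namely the Mooney--Rivlin functions of Section \ref{se:MR}.
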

\begin{proof} Our proof is based on the application of gradient Young measures \cite{Pedregal97}. Let $\mathcal{Y}(A)$ be the set of homogeneous gradient Young measures of barycenter $A$.
It is known that  $W$ is strictly quasiconvex if and only if 
\[W(A)< \int_{\Rmn} W(F)\, d\nu(F),\]
for all $A\in \Rmn$ and all $\nu\in \mathcal{Y}(A)\setminus \{\delta_A\}$, where $\delta_A\in \mathcal{Y}(A)$ is the Dirac delta at $A$.

By assumption, $W = \bar{w}^{qc} \leq \bar{w}$.
Fix $A \in \Rmn$.
By the expression of the quasiconvexification in terms of Young measures (\cite{Pedregal97}), there exists $\nu \in \mathcal{Y}(A)$, such that 
\[\bar{w}^{qc}(A) = \int_{\Rmn} \bar{w}(F)\,d\nu(F) .\]
If $\nu \neq \d_A$ then
\[
 \int_{\Rmn} \bar{w}(F)\,d\nu(F) \geq \int_{\Rmn} W (F)\,d\nu(F) > W(A) ,
\]
a contradiction with the fact $\bar{w}^{qc} =W$.
Therefore, $\nu = \d_A$ and, hence, $\bar{w}^{qc} (A) = \bar{w} (A)$.
\end{proof}

With all those preliminaries, we are in a position to prove the final result of this paper.

\begin{prop}\label{pr:MR}
Let $n = m = 3$.
Let $g : (0, \infty) \to [0, \infty)$ be convex and such that there exists $a >0$ for which $g |_{[a, \infty)}$ is increasing.
Let $\alpha, \beta \geq 0$.
Assume that:
\[
 \alpha>0 \quad \text{or} \quad \beta > 0 \quad \text{or} \quad g \text{ is strictly convex.}
\]
Suppose, in addition, that
\begin{equation}\label{eq:beta0}
 \text{if } \beta = 0, \text{ for all } t \geq a \text{ there exists } t_1 >t \text{ with } g(t_1) > g(t) .
\end{equation}
Then the function $W$ of \eqref{eq:mr} is not recoverable.
\end{prop}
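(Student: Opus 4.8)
The plan is to argue by contradiction. Suppose $W$ is recoverable; then there is $\bar w:\R^{3\times 3}\to\R$ with $\bar w^{qc}=W$ satisfying the recovery identity \eqref{eq:wrecover}, i.e.\ $\bar w(A)=\fint_{\mathbb{S}^{2}}\bar w(|Az|I)\,d\mathcal{H}^{2}(z)$. The first step is to extract a clean necessary condition that no longer mentions $\bar w$. Since $\bar w\ge\bar w^{qc}=W$, feeding this into the recovery identity gives $\bar w(A)\ge\fint_{\mathbb{S}^{2}}W(|Az|I)\,d\mathcal{H}^{2}(z)=:\bar w_{0}(A)$, and monotonicity of the quasiconvexification then yields $\bar w_{0}^{qc}\le\bar w^{qc}=W$. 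Using $W(sI)=3\alpha s^{2}+3\beta s^{4}+g(s^{3})$ together with the moment identities $\fint_{\mathbb{S}^{2}}|Az|^{2}\,d\mathcal{H}^{2}(z)=\tfrac13|A|^{2}$ and $\fint_{\mathbb{S}^{2}}|Az|^{4}\,d\mathcal{H}^{2}(z)=\tfrac15|A|^{4}-\tfrac{4}{15}|\cof A|^{2}$, one computes
\[
 \bar w_{0}(A)=\alpha|A|^{2}+\tfrac{3\beta}{5}|A|^{4}-\tfrac{4\beta}{5}|\cof A|^{2}+\fint_{\mathbb{S}^{2}}g(|Az|^{3})\,d\mathcal{H}^{2}(z).
\]
The elementary inequality $3|\cof A|^{2}\le|A|^{4}$ (equality iff the singular values of $A$ all coincide) and Jensen's inequality $\fint_{\mathbb{S}^{2}}g(|Az|^{3})\ge g(\det A)$ (valid where $g$ is increasing, after scaling) show that $\bar w_{0}(A)\ge W(A)$, with strict inequality at matrices whose singular values are not all equal, provided $\beta>0$ or $g$ is strictly convex.

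Second, the case $\beta>0$. Here $\bar w_{0}$ carries the term $-\tfrac{4\beta}{5}|\cof A|^{2}$, concave in the minors, so $\bar w_{0}$ is not quasiconvex and $\bar w_{0}^{qc}\le W$ is not by itself conclusive. Instead I would invoke Proposition \ref{pr:sqc} to force $\bar w=W$ at invertible matrices. The key is that $W$ is strictly quasiconvex at each invertible $A$: for a homogeneous gradient Young measure $\nu$ of barycenter $A$, Jensen gives $\int W\,d\nu\ge\beta|\cof A|^{2}+g(\det A)=W(A)$, and equality forces $\cof F\equiv\cof A$ $\nu$-a.e.; since $\cof A$ is invertible, the identity $\cof(\cof F)=(\det F)F$ together with the barycenter constraint then pins down $\nu=\delta_{A}$. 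Hence $\bar w=W$ at invertible matrices and at every $sI$, so the recovery identity reads $W(A)=\bar w_{0}(A)$ at invertible $A$, contradicting $\bar w_{0}(A)>W(A)$ at any invertible $A$ with non-equal singular values.

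Third, the case $\beta=0$. Now $\bar w_{0}(A)=\alpha|A|^{2}+\fint_{\mathbb{S}^{2}}g(|Az|^{3})\,d\mathcal{H}^{2}(z)$ has no concave term and should be quasiconvex, so $\bar w_{0}^{qc}=\bar w_{0}$ and the necessary condition collapses to the pointwise inequality $\fint_{\mathbb{S}^{2}}g(|Az|^{3})\,d\mathcal{H}^{2}(z)\le g(\det A)$ for all $A$. This is false: at a matrix whose singular values differ one has $\fint_{\mathbb{S}^{2}}g(|Az|^{3})>g(\det A)$ — strictly by Jensen when $g$ is strictly convex, and, when $\alpha>0$ but $g$ is merely convex, by choosing $A=\operatorname{diag}(\lambda,\lambda,d\lambda^{-2})$ and letting $\lambda\to\infty$, so that $\det A\equiv d$ while $\fint_{\mathbb{S}^{2}}g(|Az|^{3})\to\infty$, since hypothesis \eqref{eq:beta0} forces $g\to\infty$. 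Either way we contradict $\bar w_{0}^{qc}\le W$.

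The main obstacle is the quasiconvexity claim in the third step (and, more generally, bounding the envelope $\bar w_{0}^{qc}$ from below). The natural route is to write $A\mapsto g(|Az|^{3})$ as the composition of the convex map $A\mapsto|Az|$ with $t\mapsto g(t^{3})$ and then average over $z$; this is convex, hence quasiconvex, exactly where $t\mapsto g(t^{3})$ is convex and nondecreasing. Since $g$ is only \emph{eventually} increasing, convexity of the composition may fail for small arguments, so one must check that this cannot depress $\bar w_{0}^{qc}$ below $g(\det A)$ at the test matrices — for instance by scaling the test matrix into the increasing regime of $g$, or by replacing $g$ with a nondecreasing convex minorant agreeing with it on $[a,\infty)$. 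Making this rigorous is the crux; by contrast, the cofactor/Young-measure computation giving strict quasiconvexity at invertible matrices in the case $\beta>0$ is comparatively routine.
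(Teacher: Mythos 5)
Your strategy is sound, and it is genuinely different from the paper's proof, so a comparison is worthwhile. The paper argues that the hypotheses make $W$ strictly polyconvex, invokes Propositions \ref{pr:spcsqc} and \ref{pr:sqc} to identify $\bar{w}=W$ \emph{globally}, and then refutes \eqref{eq:wrecover} for $W$ itself with the single family $A=\mathrm{diag}(\lambda,1/\lambda,(a/c)^{1/3})$, using the Jensen bound $\fint_{\mathbb{S}^2}|Az|^4\,d\mathcal{H}^2(z)\ge |A|^4/9$ and a norm-equivalence constant $c$ with $\fint_{\mathbb{S}^2}|Az|^3\,d\mathcal{H}^2(z)\ge c|A|^3$, where you use the exact moment identity $\fint_{\mathbb{S}^2}|Az|^4\,d\mathcal{H}^2(z)=\tfrac15|A|^4-\tfrac{4}{15}|\cof A|^2$; both cases $\beta>0$ and $\beta=0$ are dispatched by the same matrices. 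You instead combine a soft necessary condition ($\bar{w}_0^{qc}\le W$, which needs no strict quasiconvexity whatsoever) with, for $\beta>0$ only, strict quasiconvexity of $W$ \emph{localized at invertible matrices}; your cofactor/Young-measure argument there is correct ($\cof F=\cof A$ $\nu$-a.e.\ forces $F=\pm A$, and the barycenter excludes $-A$), and the localized version of Proposition \ref{pr:sqc} follows by its same proof. This localization is not cosmetic: when $\alpha=\beta=0$ and $g$ is strictly convex, $W=g\circ\det$ is \emph{not} strictly quasiconvex (nonzero compactly supported volume-preserving perturbations $v$ give $\det(A+Dv)\equiv\det A$, hence equality), so the paper's global identification $\bar w=W$ is delicate exactly in the degenerate parameter cases, while your $\beta=0$ branch avoids it entirely. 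That robustness is what your longer route buys. One small repair in your $\beta>0$ step: choose the test matrix with $\det A\ge a$ (e.g.\ $\mathrm{diag}(s,s,2s)$, $2s^3\ge a$), so that the term $\fint_{\mathbb{S}^2}g(|Az|^3)\,d\mathcal{H}^2(z)-g(\det A)$ is nonnegative via Jensen, AM--GM and monotonicity of $g$ on $[a,\infty)$; then $|A|^4>3|\cof A|^2$ gives the strict inequality you need.

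The crux you leave open in the $\beta=0$ case does close, by exactly the minorant you name. Set $\tilde{g}(t):=\inf_{s\ge t}g(s)$: it is constant to the left of the minimizing set of $g$ and equals $g$ to the right, hence convex, nondecreasing, $\tilde{g}\le g$, and $\tilde{g}=g$ on $[a,\infty)$. Then $t\mapsto\tilde{g}(t^3)$ is convex and nondecreasing, so for each $z$ the map $A\mapsto\tilde{g}(|Az|^3)$ is convex, and therefore
\[
\bar{w}_1(A):=\alpha|A|^2+\fint_{\mathbb{S}^2}\tilde{g}(|Az|^3)\,d\mathcal{H}^2(z)
\]
is a convex, hence quasiconvex, minorant of $\bar{w}_0$, which yields $\bar{w}_1\le\bar{w}_0^{qc}\le W$ pointwise. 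Now evaluate at $A_\lambda=\mathrm{diag}(\lambda,\lambda,d\lambda^{-2})$ with $d\ge a$ fixed: the right-hand side is $\alpha|A_\lambda|^2+g(d)$, while on the set $\{|z_3|\le 1/2\}$ one has $|A_\lambda z|^3\ge(3/4)^{3/2}\lambda^3$, so $\fint_{\mathbb{S}^2}\tilde{g}(|A_\lambda z|^3)\,d\mathcal{H}^2(z)\ge\tfrac12\,\tilde{g}\bigl((3/4)^{3/2}\lambda^3\bigr)\to\infty$. Here you use that \eqref{eq:beta0} plus convexity force $g(t)\to\infty$: a convex function bounded above on a half-line is nonincreasing there, hence constant by the monotonicity assumption, contradicting \eqref{eq:beta0}; and $\tilde{g}=g$ for large arguments. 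This gives the contradiction, and note it uses neither $\alpha>0$ nor strict convexity of $g$, so the internal case split in your third step is unnecessary: the explosion argument alone covers all of $\beta=0$.
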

\begin{proof}
Assume, for a contradiction, that $W$ is recoverable.
We use the notation of Section \ref{sec:Gammaconvergence}.
The assumptions of $\alpha, \beta$ and $g$ imply that $W$ is strictly polyconvex.
By Proposition \ref{pr:spcsqc}, it is strictly quasiconvex, and, in turn, by Proposition \ref{pr:sqc} we have that $W = \bar{w}$.

The proof wil be finished as soon as we show that $\bar{w}$ does not satisfy \eqref{eq:wrecover}.
For any $A \in \R^{3 \times 3}$, using Jensen's inequality we find that
\begin{equation}\label{eq:A4}
 \fint_{\mathbb{S}^2} |Az|^4 \, d \mathcal{H}^2 (z) \geq \left( \fint_{\mathbb{S}^2} |Az|^2 \, d \mathcal{H}^2 (z) \right)^2 = \frac{|A|^4}{9} .
\end{equation}
Now, it is easy to check that the expression
\[
 \left( \fint_{\mathbb{S}^2} |Az|^3 \, d \mathcal{H}^2 (z) \right)^{\frac{1}{3}}
\]
defines a norm in $\R^{3 \times 3}$.
Since all norms are equivalent in $\R^{3 \times 3}$, there exists $c >0$ such that
\[
 \fint_{\mathbb{S}^2} |Az|^3 \, d \mathcal{H}^2 (z) \geq c |A|^3 , \qquad A \in \R^{3 \times 3} .
\]
In fact, we can assume that $c \leq a^{-2}$.
Using Jensen's inequality, we find that for all $A \in \R^{3 \times 3}$ with $c |A|^3 \geq a$,
\begin{equation}\label{eq:gA3}
  \fint_{\mathbb{S}^2} g (|Az|^3 ) \, d \mathcal{H}^2 (z) \geq g \left( \fint_{\mathbb{S}^2} |Az|^3 \, d \mathcal{H}^2 (z) \right) \geq g (c |A|^3) .
\end{equation}
Using \eqref{eq:A4} and \eqref{eq:gA3}, we have that for all $A \in \R^{3 \times 3}$ with $c |A|^3 \geq a$,
\[
 \fint_{\mathbb{S}^2} \bar{w} (|Az| I)  \, d \mathcal{H}^2 (z) \geq \alpha |A|^2 + \frac{\beta}{3} |A|^4 + g (c |A|^3) . 
\]
If $\bar{w}$ were recoverable, we would have that, if $c |A|^3 \geq a$,
\[
 \beta |\cof A|^2 + g (\det A) \geq \frac{\beta}{3} |A|^4 + g (c |A|^3) .
\]
Let $\lambda >0$ and let $A$ be the diagonal matrix with diagonal elements $\lambda, 1/\lambda, \sqrt[3]{a/c}$.
Then, the inequality above reads as
\[
 \beta \left( 1 + \lambda^2 \left(\frac{a}{c}\right)^{\frac{2}{3}} + \frac{1}{\lambda^2} \left(\frac{a}{c}\right)^{\frac{2}{3}} \right) + g \left( \left(\frac{a}{c}\right)^{\frac{1}{3}} \right) \geq \frac{\beta}{3} \left( \lambda^2 + \frac{1}{\lambda^2} + \left(\frac{a}{c}\right)^{\frac{2}{3}} \right)^2 + g \left( c \left( \lambda^2 + \frac{1}{\lambda^2} + \left(\frac{a}{c}\right)^{\frac{2}{3}} \right)^\frac{3}{2} \right) .
\]
If $\beta >0$ then 
\[
 \beta \left( 1 + \lambda^2 \left(\frac{a}{c}\right)^{\frac{2}{3}} + \frac{1}{\lambda^2} \left(\frac{a}{c}\right)^{\frac{2}{3}} \right) + g \left( \left(\frac{a}{c}\right)^{\frac{1}{3}} \right) \geq \frac{\beta}{3} \left( \lambda^2 + \frac{1}{\lambda^2} + \left(\frac{a}{c}\right)^{\frac{2}{3}} \right)^2 ,
\]
which yields a contradiction when we send $\lambda \to \infty$.
If $\beta = 0$ we obtain 
\[
  g \left( \left(\frac{a}{c}\right)^{\frac{1}{3}} \right) \geq g \left( c \left( \lambda^2 + \frac{1}{\lambda^2} + \left(\frac{a}{c}\right)^{\frac{2}{3}} \right)^\frac{3}{2} \right) ,
\]
which is again a contradiction due to \eqref{eq:beta0}, since $g$ is increasing in $[a, \infty)$, and
\[
 a \leq \left(\frac{a}{c}\right)^{\frac{1}{3}} \leq c \left( \lambda^2 + \frac{1}{\lambda^2} + \left(\frac{a}{c}\right)^{\frac{2}{3}} \right)^\frac{3}{2} 
\]
provided that $\lambda$ is large enough.
\end{proof}

An analogue result holds in the incompressible case.

\begin{prop}
Let $n = m = 3$.
Let $\alpha, \beta \geq 0$.
Then the function $W : \R^{3 \times 3} \to \R \cup \{ \infty\}$ defined by
\[
 W(A)= \begin{cases}
 \alpha |A|^2+\beta |\cof A|^2 , & \text{if } \det A = 1 , \\
 \infty & \text{if } \det A \neq 1 
 \end{cases}
\]
is not recoverable.
\end{prop}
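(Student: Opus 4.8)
The plan is to argue by contradiction, exploiting that the incompressibility constraint forces $W = +\infty$ on the unbounded complement of $\{\det A = 1\}$, which turns out to be incompatible with the recovery formula. Suppose $W$ is recoverable. Then there is a density $\bar{w}$ (which we must allow to be extended-real-valued, since otherwise $\bar{w}^{qc}\le\bar{w}<\infty$ could never reproduce the infinite values of $W$ and the statement would be vacuous) satisfying the recovery identity \eqref{eq:recover} together with $W = \bar{w}^{qc}$; by Proposition \ref{pr:recover} this $\bar{w}$ also satisfies \eqref{eq:wrecover}. Since the quasiconvexification is a minorant, $\bar{w} \geq \bar{w}^{qc} = W$. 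In particular, for every $s \neq 1$ we have $\det(sI) = s^3 \neq 1$, so $W(sI) = +\infty$ and hence $\bar{w}(sI) = +\infty$.

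First I would use \eqref{eq:wrecover} to locate where $\bar{w}$ is finite. Reading
\[
 \bar{w}(A) = \fint_{\mathbb{S}^2} \bar{w}(|Az| I)\,d\mathcal{H}^2(z) ,
\]
the integrand equals $+\infty$ at every $z$ with $|Az| \neq 1$, by the previous step. Therefore $\bar{w}(A) < \infty$ forces $|Az| = 1$ for $\mathcal{H}^2$-a.e.\ $z \in \mathbb{S}^2$, and by continuity for every $z \in \mathbb{S}^2$, which is equivalent to $A^T A = I$. Thus the effective domain $\{\bar{w} < \infty\}$ is contained in the orthogonal group $O(3)$, a bounded set.

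Finally I would reach the contradiction by comparing $W$ with the convex envelope $\bar{w}^c$ of $\bar{w}$. Every convex minorant is in particular a quasiconvex minorant, so $\bar{w}^c \leq \bar{w}^{qc} = W$. Since $\bar{w}$ is finite only on the bounded set $O(3)$, a separation argument shows that $\bar{w}^c \equiv +\infty$ outside $\overline{\mathrm{conv}}\,O(3)$, which is again bounded; consequently $W \equiv +\infty$ there as well. This contradicts the fact that $W$ is finite on the unbounded surface $\{\det A = 1\}$, as witnessed for instance by $W(\mathrm{diag}(\lambda, 1/\lambda, 1))$, which stays finite while its norm tends to $\infty$ as $\lambda \to \infty$. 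The main obstacle is the bookkeeping with the value $+\infty$: one must verify that \eqref{eq:wrecover} and the envelope ordering $\bar{w}^c \leq \bar{w}^{qc}$ remain valid for extended-real-valued densities, and that the convex envelope of a function finite only on the bounded set $O(3)$ has bounded effective domain. I would also stress that Proposition \ref{pr:sqc} cannot be invoked directly here, since it presupposes that $W$ is real-valued; it is precisely this point that makes the incompressible case require the separate domain-boundedness argument above.
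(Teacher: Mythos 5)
Your proof is correct, but it takes a genuinely different route from the paper's. The paper first extends its strict-quasiconvexity machinery to this setting: it argues that $W$ is strictly polyconvex where it is finite, hence strictly quasiconvex (Proposition \ref{pr:spcsqc}), hence $W=\bar{w}$ by Proposition \ref{pr:sqc}; it then plugs $A=\mathrm{diag}(\lambda,1/\lambda,1)$, $\lambda>1$, into \eqref{eq:wrecover}, where the left-hand side $\bar{w}(A)=W(A)$ is finite while the right-hand side is $+\infty$ because $|Az|\neq 1$ on a set of positive measure. You never establish $W=\bar{w}$: you use only the trivial ordering $\bar{w}\geq\bar{w}^{qc}=W$ to deduce from \eqref{eq:wrecover} that the effective domain of $\bar{w}$ lies in $O(3)$, and then obtain the contradiction through convex minorants --- for $A_0\notin\overline{\mathrm{conv}}\,O(3)$, a separating affine function $\ell$ with $\ell<0$ on $O(3)$ and $\ell(A_0)>0$ gives, after scaling by arbitrarily large constants, real-valued convex (hence quasiconvex) minorants of $\bar{w}$, so $W(A_0)=\bar{w}^{qc}(A_0)=+\infty$, contradicting the finiteness of $W$ along the unbounded family $\mathrm{diag}(\lambda,1/\lambda,1)$. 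What your route buys is precisely the point you flag: Proposition \ref{pr:sqc} is proved via the Young-measure representation of the quasiconvexification, which is stated for real-valued integrands, and the paper silently extends it (together with strict polyconvexity and Proposition \ref{pr:spcsqc}) to $\R\cup\{\infty\}$-valued densities; your argument needs no Young measures and handles the value $+\infty$ with elementary convex analysis, at the modest cost of rechecking that Proposition \ref{pr:recover} and the inequality $\bar{w}^c\leq\bar{w}^{qc}$ survive for extended-real-valued functions (both routine; indeed the separation step can be run entirely with real-valued affine minorants, so the delicate extended-real-valued Jensen inequality can be avoided altogether). Your opening observation that the definition of recoverability must allow $\bar{w}$ and $\tilde{w}$ to take the value $+\infty$, since otherwise the statement is vacuous, is also a point the paper leaves implicit.
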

\begin{proof}
Assume, for a contradiction, that $W$ is recoverable.
It is easy to check that the function $g : \R \to \R \cup \{ \infty \}$ defined by $g(t) = \infty$ for $t \neq 1$ and $g(1) = 0$ is strictly convex at $t=1$. 
As in Proposition \ref{pr:MR}, $W$ is strictly polyconvex where it is finite, so by Propositions \ref{pr:spcsqc} and \ref{pr:sqc}, $W = \bar{w}$.

If $\bar{w}$ were not recoverable, by \eqref{eq:wrecover} we reach a contradiction by considering the matrix $A$ with diagonal elements $\l, 1/\l, 1$ with $\l >1$, since the right-hand side is infinity, while the left-hand side is finite.
\end{proof}

Using the ideas of Example \ref{ex:A}, one can generalize Proposition \ref{pr:MR} to rule out the recoverability of many families of functions of the style of Mooney--Rivlin, but replacing $|\cof A|^2$ with another convex function of $\cof A$.
However, for the sake of simplicity, we have restricted ourselves to a quadratic dependence on $\cof A$.


\section*{Acknowledgements}
This work has been supported by the Spanish Ministry of Economy and Competitivity through projects MTM2017-83740-P (J.C.B.\ and J.C.), and MTM2017-85934-C3-2-P (C.M.-C.).

\end{document}